\newtheorem{theorem}{Theorem}
\newtheorem{remark}[theorem]{Remark}
\newtheorem{lemma}[theorem]{Lemma}
\newtheorem{proposition}[theorem]{Proposition}
\DeclareMathOperator{\crank}{cr}
\DeclareMathOperator{\flip}{f}
\title{Approval Gap of Weighted $k$-Majority Tournaments}
\author{Jeremy Coste \\[-.8ex] \small{The Fu Foundation School of Engineering and Applied Science} \\[-.8ex] \small{Columbia University} \\[-.8ex] 
\small{New York, NY} 
\and Breeann Flesch \\[-.8ex] \small{Computer Science Division} \\[-.8ex] \small{Western Oregon University} \\[-.8ex] 
\small{Monmouth, OR} \\[-.8ex] \small{\tt{fleschb@wou.edu}}
\and Joshua D. Laison \\[-.8ex] \small{Mathematics Department} \\[-.8ex] \small{Willamette University}
\\[-.8ex] \small{Salem, OR} \\[-.8ex] \small{\tt{jlaison@willamette.edu}} 
\and Erin M. McNicholas \\[-.8ex] \small{Mathematics Department} \\[-.8ex] \small{Willamette University} \\[-.8ex]
\small{Salem, OR} \\[-.8ex] \small{\tt{emcnicho@willamette.edu}} 
\and Dane Miyata \\[-.8ex] \small{Department of Mathematics} \\[-.8ex] \small{University of Oregon} \\[-.8ex] \small{Eugene, OR} 
}
\begin{document}

\maketitle

\begin{abstract}
A $k$-majority tournament $T$ on a finite set of vertices $V$ is defined by a set of $2k-1$ linear orders on $V$, with an edge $u \to v$ in $T$ if $u>v$ in a majority of the linear orders.  We think of the linear orders as voter preferences and the vertices of $T$ as candidates, with an edge $u \to v$  in $T$ if a majority of voters prefer candidate $u$ to candidate $v$.  In this paper we introduce weighted $k$-majority tournaments, with each edge $u \to v$ weighted by the number of voters preferring $u$.

We define the maximum approval gap $\gamma_w(T)$, a measure by which any dominating set of $T$ beats the next most popular candidate.  This parameter is analogous to previous work on the size of minimum dominating sets of (unweighted) $k$-majority tournaments.  We prove that $k/2 \leq \gamma_w(T) \leq 2k-1$ for any weighted $k$-majority tournament $T$, and construct tournaments with $\gamma_w(T)=q$ for any rational number $k/2 \leq q \leq 2k-1$.  We also consider the minimum number of vertices $m(q,k)$ in a $k$-majority tournament with $\gamma_w(T)=q$.
\end{abstract}

\textbf{Mathematics Subject Classification:} 05C20, 05C69, 91B14

\section{Introduction}\label{sec:intro}

A \textit{\textbf{linear order}} is an ordered list of the form $v_1 < v_2 < \ldots < v_n$. 
A \textit{\textbf{k-majority tournament}} $T$ on a finite vertex set $V=\{v_1, \ldots, v_n\}$ is defined by a set of $2k-1$ linear orderings of $V$, with $u \to v$ if and only if $u > v$ in at least $k$ of the orders for each $u, v \in V$.  McGarvey proved in \cite{McGarvey1953} that every $n$ vertex tournament can be realized as a $k$-majority tournament for some $k$.  Since then, authors have studied $k$-majority tournaments from a variety of perspectives.  Stearns and Erd\H{o}s and Moser developed bounds on the smallest value of $k$ needed for a given $n$ \cite{Erdos1964, Stearns1959}.  Alshaikh identified a family of tournaments realizable as 2-majority tournaments \cite{Alshaikh}. Shepardson studied tournaments $T$ defined by putting an edge $u \to v$ in $T$ if $u>v$ in at least $2k/3$ of the linear orders \cite{Shepardson}. Alon studied bounds on the quality of a tournament, $q = \min_{u \to v \in E(T)} \frac{|\mathcal{F}(u \to v)| - |\mathcal{F}(v \to u)|}{k}$, where $|\mathcal{F}(x \to y)|$ is the number of linear orders in which $x >y$ \cite{Alon2002}.  Milans, Schrieber, and West studied the minimum, over all $k$-majority tournaments with $n$ vertices, of the largest size of a transitive subtournament \cite{Milans}.

If a directed graph $G$ contains the edge $u \to v$, we say $u$ \textbf{\textit{dominates}} $v$.  A \textbf{\textit{dominating set}} $D \subseteq V(G)$ is a set of vertices of $G$ such that for each vertex $v \notin D$, there exists a vertex $u\in D$ such that $u\to v$.  If $T$ is a set of candidates, we can think of $D$ as a winning committee in the election: each candidate not in $D$ compares unfavorably to some candidate in $D$.

Alon et al.~considered the maximum size of a minimum dominating set over all $k$-majority tournaments $T$,
\[F(k) = \sup\limits_T{\min\limits_D \{|D|\}}.\]
They proved that $F(k)$ is finite for all $k > 0$, $F(1) = 1$, $F(2) = 3$, and $F(3) \geq 4$.  They also showed that in general \[C_1 \frac{k}{\log{k}} \leq F(k) \leq C_2k \log{k}\] for constants $C_1$ and $C_2$ \cite{Alon}.  Fidler found an exponential upper bound on $F(k)$ that improves on the above upper bound for small values of $k$. In particular Fidler showed that $F(3) \leq 12$ \cite{Fidler}.

Note that in a $k$-majority tournament, each edge $u \to v$ represents a majority of voters, but this majority could range from $k$ voters to a consensus of $2k-1$ voters.  To preserve the degree of voter agreement, we define a \textit{\textbf{weighted $k$-majority tournament}} as a $k$-majority tournament where each edge $u \to v$ has weight $w(u\to v)$ equal to the number linear orders which have $u>v$.  Since each directed edge represents a majority, each edge must have weight greater than or equal to $k$ and less than or equal to $2k-1$.  An example of a weighted 3-majority tournament is shown in Figure~\ref{fig:weighted_ex}.

\begin{figure}[ht!]
\centering
\parbox[b]{3cm}{
\begin{align*}
\pi_1&: a>d>c>b,\\
\pi_2&: d>c>b>a,\\
\pi_3&: d>c>b>a,\\
\pi_4&: a>d>b>c, \text{ and }\\
\pi_5&: a>d>c>b.
\end{align*} } \hspace*{1cm}
\begin{tikzpicture}
    \tikzset{vertex/.style = {shape=circle,draw,minimum size=20pt}}
    \tikzset{edge/.style = {->,> = latex,line width=1.6pt}}
    \node[vertex] (a) at  (90:2) {$a$};
    \node[vertex] (b) at  (180:2) {$b$};
    \node[vertex] (c) at  (270:2) {$c$};
    \node[vertex] (d) at  (3600:2) {$d$};
    \draw[edge] (a) to node [above left] {3} (b);
    \draw[edge] (a) to node [near start, left] {3} (c);
    \draw[edge] (a) to node [above right] {3} (d);
    \draw[edge] (c) to node [below left] {4} (b);
    \draw[edge] (d) to node [near start, below] {5} (b);
    \draw[edge] (d) to node [below right] {5} (c);
\end{tikzpicture}

\caption{A weighted $3$-majority tournament realized by the linear orders $\pi_1$, $\ldots$, $\pi_5$ on the vertex set $\{a,b,c,d\}$. }
\label{fig:weighted_ex} \label{ex_weight} \label{ex:weighted}
\end{figure}
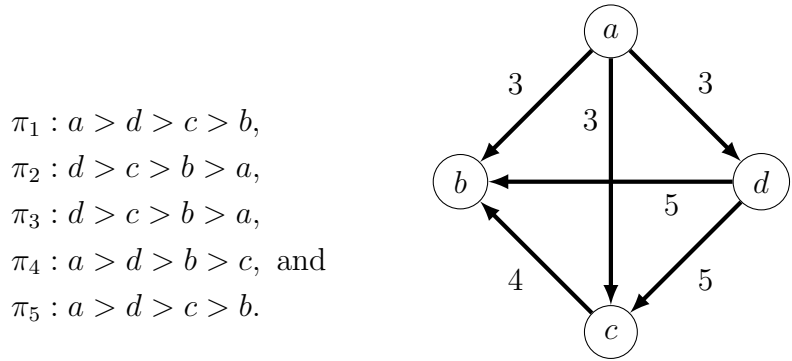

Given a weighted $k$-majority tournament $T$, a dominating set $D$ of $T$, and a vertex $v$ of $T$, we define the \textbf{\textbf{average inweight of  $v$ with respect to $D$}} as \[w_a(v,D) = \frac{1}{|D|}\sum\limits_{u \in D, u \to v \in E(T)} w(u \to v)\]
and the \textbf{\textit{weight of $D$ on $T$}} as
\[W(T,D) = \min\limits_{v\notin D} \left\{w_a(v,D)\right\}.\] In the case where $D$ contains all of the vertices of $T$, we define $W(T,D) = 0$.  Since $W(T,D)$ gives the smallest average inweight that any candidate not in $D$ is dominated by, we can think of $W(T,D)$ as the amount by which $D$ beats the next most popular candidate.  For this reason we also say $W(T,D)$ is the \textit{\textbf{approval gap}} of $D$ in $T$.

Continuing our example, for the tournament $T$ shown in Figure~\ref{fig:weighted_ex}, we have a dominating set $D = \{a,c\}$ with $W(T, D) = 3/2$.

For any tournament $T$ and dominating set $D$ of $T$, by definition $W(T,D)$ is rational, and since the edges of a weighted $k$-majority tournament cannot have weight greater than $2k-1$, we have $0 \leq W(T,D) \leq 2k-1$.  The following proposition shows that these are the only restrictions on $W(T,D)$.

\begin{proposition}
Let $k>0$ be an integer.  For each rational number $q$ with $0 \leq q \leq 2k-1$, there exits a weighted $k$-majority tournament $T$ and a dominating set $D$ of $T$ such that $W(T,D) = q$.
\end{proposition}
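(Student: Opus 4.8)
The plan is to reduce everything to controlling the average inweight of a single vertex. For $0<q\le 2k-1$ I will build $T$ on a vertex set of the form $D\cup\{v\}$ in which $v$ is the only vertex outside $D$; then $W(T,D)=w_a(v,D)$ automatically, so it suffices to arrange $w_a(v,D)=q$. Two cases are immediate. If $q=0$, let $T$ be any weighted $k$-majority tournament (say a single vertex, or the one in Figure~\ref{fig:weighted_ex}) and take $D=V(T)$, so $W(T,D)=0$ by definition and $D$ vacuously dominates $T$. If $k=1$, every edge weight equals $1$ and $q=a/b\le 1$ with $a\le b$; take $D=\{u_1,\dots,u_b\}$ and the single voter ranking $u_1>\dots>u_a>v>u_{a+1}>\dots>u_b$, which gives $w_a(v,D)=a/b=q$. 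So from now on assume $k\ge 2$, $0<q\le 2k-1$, and write $q=a/b$ with $a,b$ positive integers.

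Next I would fix the combinatorial data. Choose a positive integer $\ell$ large enough that the interval $[\ell a/(2k-1),\,\ell a/k]$ has length at least $1$ (its length is $\ell a(k-1)/(k(2k-1))$, so $\ell\ge k(2k-1)/a$ works); then the interval contains an integer, and I set $m=\ell b$ and $j=\lceil \ell a/(2k-1)\rceil$. Because $q\le 2k-1$ we have $j\le m$, and by the choice of the interval $jk\le \ell a\le j(2k-1)$, so $\ell a$ can be written as $w_1+\cdots+w_j$ with each $w_i\in\{k,k+1,\dots,2k-1\}$; reindex so that $w_1\ge w_2\ge\cdots\ge w_j$.

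Now I would realize the tournament. Put $D=\{u_1,\dots,u_m\}$, add one more vertex $v$, and define $2k-1$ linear orders, each of which ranks $u_1>u_2>\cdots>u_m$ and then inserts $v$ into this chain as follows: in $2k-1-w_1$ of the orders $v$ is placed above $u_1$; for $1\le i\le j-1$, in exactly $w_i-w_{i+1}$ of the orders $v$ is placed immediately below $u_i$; and in $w_j$ of the orders $v$ is placed immediately below $u_j$. These multiplicities are nonnegative and sum to $2k-1$. A direct count shows that every edge among the $u_i$ has weight $2k-1$, that $u_i\to v$ is an edge of weight exactly $w_i$ for $1\le i\le j$ (these are genuine edges since $w_i\ge k$), and that $v\to u_i$ is an edge of weight $2k-1$ for $i>j$; hence $T$ is a weighted $k$-majority tournament. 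Since $v$ is the unique vertex outside $D$ and $u_1\to v$, the set $D$ is dominating, and $W(T,D)=w_a(v,D)=\tfrac1m\sum_{i=1}^{j}w_i=\ell a/(\ell b)=q$.

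The one genuine obstacle is the arithmetic in the second step. For small numerators (for instance $q=1/b$ with $k$ large) no single edge into $v$—and no small bundle of such edges—has the right total weight, because every edge weight is at least $k$; replacing the denominator $b$ by the large multiple $m=\ell b$ is precisely what makes room to express $\ell a$ as a sum of $j$ admissible weights with $j\le m$. Once $\ell$, $j$, and the weights $w_i$ are chosen, the ``fixed common chain with $v$ inserted'' construction turns realizability by $2k-1$ orders and the verification of all edge weights into a routine bookkeeping exercise.
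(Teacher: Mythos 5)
Your construction is correct and takes essentially the same route as the paper: isolate a single vertex $v$ outside $D$, have all $2k-1$ voters agree on a chain on $D$, and blow up the denominator so that $w_a(v,D)=q$. The paper's version is leaner --- by taking $|D| = y(2k-1)$ and $2k-1$ identical orders, every edge into $v$ can have weight $2k-1$, so the integer-in-an-interval argument, the sorted weights $w_1\ge\cdots\ge w_j$ with $v$ inserted at varying positions, and the separate $k=1$ case are all unnecessary.
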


\begin{proof}
If $q=0$ then we may take $T$ to be any $k$-majority tournament and $D=T$.  If $q>0$ we write $q = x/y$ where $x$ and $y$ are positive integers and let $z = y(2k-1)$.  Let $n = z + 1$ and let $T$ have vertex set $\{v_1, v_2, \ldots v_n\}$.  Notice that $x/y \leq 2k-1$ implies $x/z \leq 1$, so $x \leq z <n$.   We construct a set of $2k-1$ linear orders by taking $2k-1$ copies of the linear order $v_1 > v_2 > v_3 > \ldots > v_n$.  The corresponding $k$-majority tournament $T$ has edges $v_i \to v_j$ if and only if $i < j$, each with weight $2k-1$.  Now let $D = V(T) - \{v_{x+1}\}$, so the only vertex not in $D$ is $v_{x+1}$ and $|D| = z$.  Since $v_1,\ldots,v_x$ are the vertices that dominate $v_{x+1}$, $w_a(v_{x+1})=x(2k-1)$ and thus \[W(T,D) = \frac{x(2k-1)}{z} = \frac{x}{y} = q. \qedhere\]
\end{proof}

\section{Maximum approval gap}

For a given $k$-majority tournament $T$, we define the maximum approval gap
\[\gamma_w(T) = \max_D \{W(T,D)\},\] where the maximum is taken over all dominating sets of $T$.  The  dominating set that achieves this maximum beats the next most popular candidate by the largest possible margin.

Continuing our example, consider again the tournament $T$ in Figure \ref{fig:weighted_ex}.  If $D = \{a\}$, then $W(T, D) = 3$. If $D = \{a, d\}$, then $W(T, D) = 4$.  One can verify that every dominating set of $T$ has weight at most $4$. Therefore $\gamma_w(T) = 4$.

We now give bounds on $\gamma_w(T)$ for any $k$-majority tournament $T$.

\begin{proposition} \label{prop:gammabound1}For any weighted $k$-majority tournament $T$, $k/2 \leq \gamma_w(T) \leq 2k-1$.
\end{proposition}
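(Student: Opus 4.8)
The plan is to treat the two inequalities separately; the upper bound is immediate from the weight restriction on edges, and the lower bound follows by exhibiting one particular dominating set.

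For the upper bound, I would observe that in a weighted $k$-majority tournament every edge has weight at most $2k-1$, so for any dominating set $D$ and any $v\notin D$ the sum defining $w_a(v,D)$ has at most $|D|$ terms, each at most $2k-1$; dividing by $|D|$ gives $w_a(v,D)\le 2k-1$. Taking the minimum over $v\notin D$ and then the maximum over $D$ yields $\gamma_w(T)\le 2k-1$ (the case $D=V(T)$, which contributes $0$, causes no trouble).

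For the lower bound, write $n=|V(T)|$ and assume $n\ge 2$. The key observation is that the out-degrees of $T$ sum to $\binom{n}{2}$, so some vertex $z$ has out-degree at most $(n-1)/2$, equivalently has in-degree $d^-(z)\ge (n-1)/2\ge 1$. I would then take $D=V(T)\setminus\{z\}$. This $D$ is dominating, since the only vertex outside it is $z$ and $d^-(z)\ge 1$ guarantees some vertex of $D$ beats $z$. Every edge into $z$ has weight at least $k$, and there are exactly $d^-(z)$ of them, so
\[
w_a(z,D)\;=\;\frac{1}{n-1}\sum_{\substack{u\in D\\ u\to z}} w(u\to z)\;\ge\;\frac{k\,d^-(z)}{n-1}\;\ge\;\frac{k}{2},
\]
and since $z$ is the unique vertex outside $D$ this says $W(T,D)\ge k/2$, whence $\gamma_w(T)\ge k/2$.

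I do not expect a serious obstacle; the one genuine idea is choosing $D$ to be the complement of a minimum-out-degree vertex, so that the denominator $|D|=n-1$ is as large as possible while the single vertex it must dominate is beaten by at least half of $D$, each such edge contributing weight $\ge k$. (A naive attempt to use a small dominating set fails, because $|D|$ in the denominator of $w_a$ can be far larger than the number of dominators of a given $v$.) It is also worth recording that $k/2$ is best possible, realized by a $3$-cycle with all edge weights equal to $k$, and that the one-vertex case should be excluded or handled by convention if one wants the statement to hold literally for every $n$.
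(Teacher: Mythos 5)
Your argument is correct and is essentially the paper's own proof: the upper bound comes from the $2k-1$ cap on edge weights, and the lower bound from deleting a vertex of in-degree at least $(n-1)/2$ and using that each of its incoming edges has weight at least $k$. The only (minor) addition is your explicit remark about the one-vertex case, which the paper's proof also implicitly sets aside.
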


\begin{proof}
The upper bound on $\gamma_w(T)$ cannot be greater than $2k-1$, as the maximum weight on each edge is $2k-1$.

For the lower bound, if $T$ has $n$ vertices, $T$ must have a vertex $v$ with indegree at least $(n-1)/2$. Thus, because each edge has weight at least $k$, the inweight of $v$ must be at least $k(n-1)/2$. So if we take $D = T - v$, then $D$ is a dominating set with cardinality $n-1$, and $W(T, D) \geq  k/2$. As a result, since $\gamma_w(T)$ is the maximum over all $D$, the lower bound on $\gamma_w(T)$ is $k/2$.

Therefore, $k/2 \leq \gamma_w(T) \leq 2k-1$.
\end{proof}

In Theorem~\ref{characterization} below, we prove that for any rational number $q$ with $k/2 \leq q \leq 2k-1$, there is a tournament $T$ with $\gamma_w(T)=q$.  We first introduce a family of tournaments used in the construction.  The \textbf{\textit{clockwise tournament}} $CW(n)$ is a tournament on $n$ vertices $v_1,v_2, \ldots, v_n$, with the following directed edges. When $n$ is odd and for any distinct integers $i$ and $j$, $1 \leq i \leq n$ and $1 \leq j \leq n$, $v_i \to v_j$ if and only if $j = i + m$ (mod $n$) for some $m \in \{1, 2, \ldots, \frac{n-1}{2}\}$.  If $n$ is even, then for all integers $i$ with $1 \leq i \leq \frac{n}{2}$, $v_i \to v_j$ if and only if $j = i + m$ (mod $n$) for some $m \in \{1, 2, \ldots, \frac{n}{2}\}$ and for all integers $i$ with $\frac{n}{2} < i \leq  n$, $v_i\to v_j$ if and only if $j = i + m$ (mod $n$) for some $m \in \{1, 2, \ldots, \frac{n}{2}-1\}$.  Figure \ref{fig:clockwise} shows an odd clockwise tournament and an even clockwise tournament.

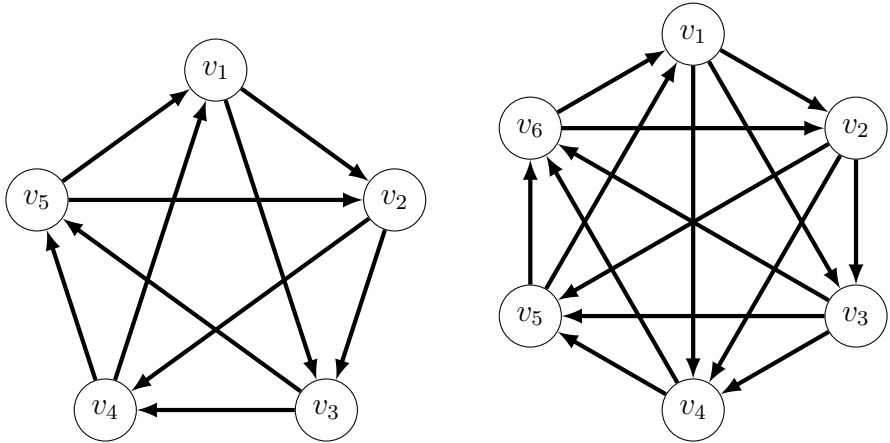
\begin{figure}[ht!]
\label{fig:clockwise}
\centering
\begin{tikzpicture} 
    \tikzset{vertex/.style = {shape=circle,draw,minimum size=20pt}}
    \tikzset{edge/.style = {->,> = latex,line width=1.6pt}}
    \node[vertex] (a) at  (90:2.5) {$v_1$};
    \node[vertex] (e) at  (162:2.5) {$v_5$};
    \node[vertex] (d) at  (234:2.5) {$v_4$};
    \node[vertex] (c) at  (306:2.5) {$v_3$};
    \node[vertex] (b) at  (18:2.5) {$v_2$};
    \draw[edge] (a) to (b);
    \draw[edge] (a) to (c);
    \draw[edge] (b) to (c);
    \draw[edge] (b) to (d);
    \draw[edge] (c) to (d);
    \draw[edge] (c) to (e);
    \draw[edge] (d) to (e);
    \draw[edge] (d) to (a);
    \draw[edge] (e) to (a);
    \draw[edge] (e) to (b);
\end{tikzpicture}
\qquad
\begin{tikzpicture} 
    \tikzset{vertex/.style = {shape=circle,draw,minimum size=20pt}}
    \tikzset{edge/.style = {->,> = latex,line width=1.6pt}}
    \node[vertex] (a) at  (90:2.5) {$v_1$};
    \node[vertex] (e) at  (-150:2.5) {$v_5$};
    \node[vertex] (f) at  (150:2.5) {$v_6$};
    \node[vertex] (d) at  (-90:2.5) {$v_4$};
    \node[vertex] (c) at  (-30:2.5) {$v_3$};
    \node[vertex] (b) at  (30:2.5) {$v_2$};
    \draw[edge] (a) to (b);
    \draw[edge] (a) to (c);
    \draw[edge] (a) to (d);
    \draw[edge] (b) to (c);
    \draw[edge] (b) to (d);
    \draw[edge] (b) to (e);
    \draw[edge] (c) to (d);
    \draw[edge] (c) to (e);
    \draw[edge] (c) to (f);
    \draw[edge] (d) to (e);
    \draw[edge] (d) to (f);
    \draw[edge] (e) to (f);
    \draw[edge] (e) to (a);
    \draw[edge] (f) to (a);
    \draw[edge] (f) to (b);
\end{tikzpicture}
\caption{Clockwise tournaments $CW(5)$ and $CW(6)$ on 5 and 6 vertices.}
\end{figure}

\begin{remark}\label{rmk: 2 to k-Tournament}
If $T$ is a weighted $2$-majority tournament where each edge has weight $2$, we can realize $T$ as a weighed $k$-majority tournament where each edge has weight $k$ for any $k>2$ as follows.  Since $T$ is a weighted $2$-majority tournament, $T$ is realized by three linear orders.  We add $k-2$ orders $v_1< \cdots< v_n$ and $k-2$ orders $v_n< \cdots < v_1$ to obtain $2k-1$ linear orders in total.  Each edge of $T$ now agrees with exactly 2 of the first three orders and $k-2$ of the remaining $2k-4$ orders for a total weight of $k$.
\end{remark}

\begin{lemma}
\label{lem: clockwise}
For any integer $k \geq 2$, every clockwise tournament can be realized as a weighted $k$-majority tournament where each edge has weight $k$.
\end{lemma}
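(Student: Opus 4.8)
The plan is to reduce to $k=2$ and then to exhibit three explicit linear orders. By Remark~\ref{rmk: 2 to k-Tournament} it suffices to realize each clockwise tournament $CW(n)$ as a weighted $2$-majority tournament all of whose edges have weight $2$; that is, to produce three linear orders $\sigma_1,\sigma_2,\sigma_3$ on $v_1,\dots,v_n$ whose majority tournament is $CW(n)$ and in which no pair of vertices is ranked the same way by all three orders (so every edge is decided by a $2$–$1$ vote and hence has weight exactly $2$). Write $m=\lfloor n/2\rfloor$ and let $H_1=\{v_1,\dots,v_m\}$ and $H_2=\{v_{m+1},\dots,v_n\}$. I would take $\sigma_1$ to be the order $v_1>v_2>\dots>v_n$, take $\sigma_2$ to be its cyclic rotation $v_{m+1}>v_{m+2}>\dots>v_n>v_1>\dots>v_m$, and take $\sigma_3$ to be a carefully chosen interleaving of the decreasing orders on $H_1$ and on $H_2$. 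The case $n=1$ is trivial, and $n=2$ falls under the even formula below.

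The verification splits pairs of vertices into two types. If both vertices of a pair lie in the same half, then $\sigma_1$ and $\sigma_2$ both rank them so that the lower-indexed vertex wins, and one checks directly from the definition of $CW(n)$ that this is indeed the direction of that edge in $CW(n)$ — here the arc-length bound (a clockwise arc of length at most $\lfloor n/2\rfloor$ gives an edge), together with the extra split into $i\le m$ and $i>m$ in the even case, does the work. Since $\sigma_3$ restricted to each half is decreasing, it ranks the two vertices the other way, so this edge receives exactly two votes and points correctly. If a pair consists of one vertex $v_i\in H_1$ and one vertex $v_j\in H_2$, then by the choice of rotation amount $\sigma_1$ and $\sigma_2$ disagree, so $\sigma_3$ casts the deciding vote; it must rank $v_i$ above $v_j$ exactly when $CW(n)$ contains $v_i\to v_j$, i.e.\ exactly when $j-i\le m$.

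The heart of the proof, and the step I expect to be the main obstacle, is choosing the interleaving in $\sigma_3$ so that this last condition holds for every cross-half pair at once. Thinking of the interleaving as a lattice path that repeatedly appends either the next element of $H_1$ (in decreasing order) or the next element of $H_2$ (in decreasing order), the requirement becomes: immediately before the $\beta$-th element of $H_2$ is appended, exactly $\beta-1$ elements of $H_1$ have been appended when $n=2m+1$ is odd, and exactly $\beta$ elements of $H_1$ when $n=2m$ is even. This pins down the alternating orders
\[
\sigma_3=(v_n,\,v_m,\,v_{n-1},\,v_{m-1},\,\dots,\,v_{m+2},\,v_1,\,v_{m+1})\quad(n=2m+1),
\]
\[
\sigma_3=(v_m,\,v_n,\,v_{m-1},\,v_{n-1},\,\dots,\,v_1,\,v_{m+1})\quad(n=2m),
\]
and a short computation comparing positions in $\sigma_3$ confirms that, for $v_i\in H_1$ and $v_j\in H_2$, $v_i$ precedes $v_j$ in $\sigma_3$ if and only if $j-i\le m$, which is precisely the $CW(n)$ edge condition across halves. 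Applying Remark~\ref{rmk: 2 to k-Tournament} then upgrades this weight-$2$ realization to a weight-$k$ realization for every $k\ge 2$. As a sanity check I would verify the two displayed orders directly against $CW(5)$ and $CW(6)$.
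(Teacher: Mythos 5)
Your proposal is correct and follows essentially the same route as the paper: reduce to $k=2$ via Remark~\ref{rmk: 2 to k-Tournament} and then realize $CW(n)$ by three explicit orders consisting of the identity order, a rotation by roughly half, and an alternating interleaving of the two halves in decreasing index order. Your specific choices of $\sigma_2$ and $\sigma_3$ differ from the paper's $\pi_2$ and $\pi_3$ only by the rotation offset and the interleaving pattern, and your position computation (which I checked, including the cross-half criterion $j-i\le m$ in both parities) supplies the verification that the paper leaves to the reader.
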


\begin{proof}
By Remark \ref{rmk: 2 to k-Tournament}, it suffices to show that every clockwise tournament can be realized as a weighted $2$-majority tournament where each edge has weight $2$.  Let $T= CW(n)$ with vertices $v_1, \ldots, v_n$ as defined above.

If $n$ is odd, three linear orders that realize $T$ are
\begin{align*}
    \pi_1&: v_1 > v_2 > \ldots > v_n, \\
    \pi_2&: v_{\frac{n-1}{2}+2} > v_{\frac{n-1}{2} + 3} > \ldots > v_{n-1} > v_{n} > v_1 > v_2 > \ldots > v_\frac{n-1}{2} > v_{\frac{n-1}{2} + 1}, \text{ and }\\
    \pi_3&: v_{\frac{n-1}{2} + 1} > v_{n} > v_{\frac{n-1}{2}} > v_{n-1} >  \ldots > v_3 > v_{\frac{n-1}{2} + 3} > v_2 > v_{\frac{n-1}{2} + 2} > v_1. 
\end{align*}

If $n$ is even, three linear orders that realize $T$ are
\begin{align*}
    \pi_1&: v_1 > v_2 > \ldots > v_n, \\
    \pi_2&: v_{\frac{n}{2}+2} > v_{\frac{n}{2} + 3} > \ldots > v_{n - 1} > v_{n} > v_1 > v_2 > \ldots > v_{\frac{n}{2}} > v_{\frac{n}{2} + 1}, \text{ and }\\
    \pi_3&: v_{\frac{n}{2} + 1} >  v_{\frac{n}{2}} > v_{n} > v_{\frac{n}{2} - 1} > v_{n - 1}  > \ldots > v_3 > v_{\frac{n}{2} + 3} > v_2 > v_{\frac{n}{2} + 2} > v_1.
\end{align*}
\end{proof}

The proof of Lemma \ref{lem:clock_set} will make use of the following method of partitioning the vertices of $T=CW(n)$. Given a vertex $v_i$, define the sets $X_i=\{v_i,v_{i+1},\ldots,v_{i+\lfloor\frac{n-1}{2}\rfloor}\}$ and $Y_i=T-X_i.$ For example in Figure \ref{fig:clockwise}, the set $Y_1$ in $CW(5)$ is $\{v_4,v_5\}$, and $Y_1$ in $CW(6)$ is $\{v_4, v_5,v_6\}$.  By definition of a clockwise tournament, $v_i$ dominates every element of $X_i$ other than itself.  In fact, when $n$ is odd, $X_i$ is the closed out-neighborhood of $v_i$, and $Y_i$ is the open in-neighborhood of $v_i,$ in that every element of $Y_i$ dominates $v_i$.  When $n$ is even, this is also true for $n/2<i \leq n$, and true for $1 \leq i\le n/2$ with the exception of the vertex $v_{i+\frac{n}{2}}$ of $Y_i$ dominated by $v_i$.

We perform the following two operations on the vertices of a clockwise tournament $T$.  Given a non-empty proper subset $S$ of the vertices of $T$, and a vertex $v_i$ in $T$, the \textbf{crank operation} $\crank(v_i)$ returns the vertex $v_{i-\ell}$ where $\ell$ is the smallest non-negative integer such that $v_{i-\ell}\not\in S.$ The \textbf{flip operation} $\flip(v_i)$, returns the vertex $v_{i+\lceil\frac{n}{2}\rceil}.$ Since the set $S$ is fixed in Lemma~\ref{lem:clock_set}, we omit it in the notation of $\crank(x_i)$ and $\flip(x_i)$.  We  use $\crank(X_i)$ and $\flip(X_i)$ to denote the sets $X_{i-\ell}$ and $X_{i+\lceil\frac{n}{2}\rceil}$ respectively. By definition of the crank operation, $\crank(X_i)-X_i$ contains exactly $\ell-1$ elements of $S$, while $X_i-\crank(X_i)$ could contain at most $\ell$ elements of $S$.  Thus,
\begin{equation}\label{eqn: crank ineq}
    |\crank(X_i)\cap S|\ge|X_i\cap S|-1.
\end{equation}
By definition of the flip operation, when $|T|$ is even
\begin{equation}\label{eqn: flip even}
    \flip(X_i)=Y_i,
\end{equation}
and when $|T|$ is odd
\begin{equation}\label{eqn: flip odd}
    \flip(X_i)=Y_i\cup\{v_i\}.
\end{equation}

\begin{lemma} \label{lem:clock_set}
Let $T$ be the clockwise tournament $CW(n)$ and let $S$ be a proper subset of $V(T)$.  If $S = T-v$ for some vertex $v$, then $v$ is dominated by at most $\frac{|S|+1}{2}$ vertices of $S$.  Otherwise, there exists a vertex $v\not\in S$ such that $v$ is dominated by at most $\frac{|S|}{2}$ vertices of $S$.
\end{lemma}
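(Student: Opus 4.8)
The plan is to treat the two alternatives of the statement separately, handling the main (``otherwise'') alternative by a \emph{crank--flip--crank} argument built from the two operations just introduced.

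The case $S = T - v$ is immediate. The vertices of $S$ dominating $v$ are exactly the in-neighbours of $v$, so it suffices to check that every vertex of $CW(n)$ has indegree at most $(|S|+1)/2 = n/2$. When $n$ is odd this indegree is $|Y_i| = (n-1)/2 = |S|/2$; when $n$ is even it is $|Y_i| = n/2$ for $n/2 < i \le n$ and $|Y_i| - 1 = n/2 - 1$ for $i \le n/2$. This is precisely where the first alternative cannot be improved to $|S|/2$: a vertex $v_i$ of an even clockwise tournament with $n/2 < i \le n$ is beaten by exactly $(|S|+1)/2$ vertices.

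Now suppose $S$ is proper with $|V(T) \setminus S| \ge 2$. For $v_i \notin S$ we have $|X_i \cap S| + |Y_i \cap S| = |S|$, and $v_i$ is dominated by exactly $|Y_i \cap S|$ vertices of $S$ when $n$ is odd or when $n$ is even with $n/2 < i \le n$, and by $|(Y_i \setminus \{v_{i+n/2}\}) \cap S|$ vertices when $n$ is even with $i \le n/2$. Hence it is enough to produce $v_p \notin S$ with $|X_p \cap S| \ge \lceil |S|/2 \rceil$, or, in the even case with $p \le n/2$, merely with $|X_p \cap S| \ge \lceil |S|/2 \rceil - 1$ together with $v_{p+n/2} \in S$. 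Pick any $v_a \notin S$ and look at $X_a$. If $|X_a \cap S| \ge \lceil |S|/2 \rceil$ then $v_a$ works. Otherwise $|Y_a \cap S| = |S| - |X_a \cap S| \ge \lfloor |S|/2 \rfloor + 1$; since $v_a \notin S$, \eqref{eqn: flip even} and \eqref{eqn: flip odd} give $|\flip(X_a) \cap S| = |Y_a \cap S|$, so setting $X_p = \crank(\flip(X_a))$ produces $v_p \notin S$ with, by \eqref{eqn: crank ineq}, $|X_p \cap S| \ge |Y_a \cap S| - 1 \ge \lfloor |S|/2 \rfloor$. When $n$ is odd the crank in fact loses nothing, because the arc $\flip(X_a) = X_{a + (n+1)/2}$ has $v_a \notin S$ as its last vertex: the $\ell$ tail vertices dropped by a crank of shift $\ell$ therefore include $v_a$, so at most $\ell - 1$ of them lie in $S$, matching the $\ell - 1$ elements of $S$ that the minimality in the crank's definition forces it to add at the front; thus $|X_p \cap S| \ge |Y_a \cap S| \ge \lceil |S|/2 \rceil$ and $v_p$ works. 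When $n$ is even and $|S|$ is even, $\lfloor |S|/2 \rfloor = \lceil |S|/2 \rceil$ and again $v_p$ works.

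The delicate case is $n$ even with $|S|$ odd, where $|X_p \cap S|$ can be only $(|S|-1)/2$ (otherwise $v_p$ works); then $|Y_a \cap S| = (|S|+1)/2$ and the crank lost exactly one vertex. The key observation is that this forces all $\ell$ tail vertices $v_{a-1}, \dots, v_{a-\ell}$ it drops to lie in $S$ (since the $\ell$ front vertices it adds contain only $\ell - 1$ elements of $S$), and in particular $v_{p+n/2} = v_{a-\ell} \in S$. Consequently, if the vertex produced satisfies $p \le n/2$, it is dominated by $|(Y_p \setminus \{v_{p+n/2}\}) \cap S| = (|S|+1)/2 - 1 = (|S|-1)/2 \le |S|/2$ vertices of $S$, and we are done. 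The step I expect to be the genuine obstacle is the last remaining sub-case --- $n$ even, $|S|$ odd, the crank loses a vertex, and $p > n/2$. I would try to eliminate it by choosing $v_a$ with $v_{a-1} \notin S$ (possible whenever $V(T) \setminus S$ has two cyclically consecutive vertices, and enough to make the post-flip crank automatically lossless) together with a direct count in the complementary configuration where $V(T) \setminus S$ is an independent set of the underlying $n$-cycle; alternatively, one can iterate the crank--flip--crank starting from $v_p$, using that $v_p \notin S$ and, in the bad case, $v_{p+n/2} \in S$, and show that the resulting sequence of centres cannot cycle while every term stays bad. Nailing down this final sub-case is where most of the remaining work lies.
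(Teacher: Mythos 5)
Your setup and the easy parts are sound and essentially follow the paper's own crank/flip strategy: the case $S=T-v$, the reduction to finding $v_p\notin S$ with $|X_p\cap S|\ge\lceil|S|/2\rceil$ (or the weaker condition when $n$ is even, $p\le n/2$, and $v_{p+n/2}\in S$), the odd-$n$ argument that the crank performed after a flip is lossless because $v_a\notin S$ sits at the tail of $\flip(X_a)$, and the even-$n$ situations with $|S|$ even or with $p\le n/2$ all check out (one small wrinkle: when the crank shift $\ell$ equals the window size, the dropped set has only $\ell-1$ elements and need not contain $v_a$, but the count still goes through since at most $\ell-1$ elements of $S$ are lost). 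However, you have not proved the lemma: the sub-case you yourself flag --- $n$ even, $|S|$ odd, the crank loses exactly one element of $S$, and the resulting centre has $p>n/2$ --- is left as two untested strategies, and this is precisely where the real work lies. Your first suggestion (choose $v_a$ with $v_{a-1}\notin S$) does dispose of the configuration in which two vertices outside $S$ are cyclically consecutive, but the complementary configuration is not a ``direct count'': nothing you have written produces, when $V(T)\setminus S$ is scattered, a vertex outside $S$ whose window meets $S$ in enough elements, and the only cheap averaging available ($\sum_i|X_i\cap S|=\tfrac{n}{2}|S|$) guarantees a large window centred at \emph{some} vertex, not at one outside $S$. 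Your second suggestion (iterate and show the bad centres cannot cycle) is closer to what actually works, but it is not carried out.

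For comparison, the paper closes this sub-case by iterating crank-and-flip twice more (producing, in its notation, $v_m$ and then $v_p$ from $v_k$), observing that if every crank is lossy then the three dropped sets $X_j-X_k$, $\flip(X_k)-X_m$, $\flip(X_m)-X_p$ lie entirely in $S$, and then using the cyclic ordering $j<p+\tfrac n2<m<k+\tfrac n2<j+\tfrac n2<p<m+\tfrac n2<k<j\pmod n$ to conclude that at least one of $k,m,p$ exceeds $n/2$. For that vertex $v_t$, the special vertex $v_{t+n/2}\in Y_t$ is dominated by $v_t$ and lies in one of the dropped sets, hence in $S$, so $v_t$ is beaten by at most $|Y_t\cap S|-1=\tfrac{|S|-1}{2}$ vertices of $S$. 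Some argument of this kind (or a completed version of one of your sketches) is required; as submitted, the proposal is incomplete exactly at its crux.
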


\begin{proof}
Let $T$ have vertices $v_1, \ldots, v_n$ as defined above.  For the purposes of this proof, we say that $v_i = v_j$ if $i = j$ (mod $n$).

If $S= T-v$, then $v$ is dominated by at most $\frac{|S|+1}{2}$ vertices of $S$ by the definition of clockwise tournaments.  Therefore suppose that there are at least two vertices of $T$ not in $S$.  Let $v_j\notin S$. If $|X_i \cap S| = |Y_i \cap S|$ for all $i$, then $|X_j \cap S|=|S|/2$.  Since $v_j$ is not dominated by elements of $X_j$, $v_j$ is dominated by at most half of the vertices of $S$.

If $|X_i \cap S| \geq |Y_i \cap S| + 2$ for some $i$, then $|X_i\cap S|\geq (|S|/2)+1.$ By definition of the crank operation and Inequality \ref{eqn: crank ineq}, $v_k=\crank (v_i)$ is a vertex not in $S$ such that $|X_k\cap S|=|\crank (X_i)\cap S|\geq (|S|/2)$.  Again, since $v_k$ is not dominated by elements of $X_k$, $v_k$ is dominated by at most $\frac{|S|}{2}$ vertices of $S$.

Therefore suppose for the remainder of the proof that $|X_i \cap S| \leq |Y_i \cap S| + 1$ for all $i$, and that there exists a vertex $j$ such that $|X_j \cap S| \not= |Y_j \cap S|.$  If $|X_j \cap S| < |Y_j \cap S|$ then by definition of the flip operation, $|\flip(X_j) \cap S| > |\flip(Y_j) \cap S|$. Hence without loss of generality we may take $|X_j \cap S| > |Y_j \cap S|$. Since we assumed $|X_i \cap S| \leq |Y_i \cap S| + 1$ for all $i$, $|X_j \cap S|=|Y_j \cap S|+1.$ Furthermore, since $|S|=|X_j\cap S|+|Y_j\cap S|,$ $$|X_j\cap S|=\frac{|S|+1}{2}$$ and $|S|$ must be odd.

\bigskip

\noindent \textbf{Case 1. $n$ is even.}
Let $v_k=\crank(v_j)$.  By Inequality \ref{eqn: crank ineq}, $|X_k\cap S|\geq|X_j\cap S|-1$. If $|X_k\cap S|>|X_j\cap S|-1$ then $|X_k\cap S|\geq\frac{|S|}{2}$ and $v_k$ is a vertex not in $S$ dominated by at most $\frac{|S|}{2}$ elements of $S.$

Suppose $|X_k\cap S|=|X_j\cap S|-1=\frac{|S|-1}{2}$, so the elements of $X_j$ not in $\crank(X_j)$ are all elements of $S$.
By Equation \ref{eqn: flip even}, $|\flip(X_k)\cap S|=|Y_k\cap S|=\frac{|S|+1}{2}.$  Setting $v_m=\crank(\flip(v_k))$, if $|X_m\cap S|>|\flip(X_k)\cap S|-1$ then $v_m$ is a vertex not in $S$ dominated by at most half the elements of $S.$  If $|X_m\cap S|=|\flip(X_k)\cap S|-1=|Y_k\cap S|-1=\frac{|S|-1}{2},$ we can set $v_p=\crank(\flip(v_m))$ to find $|X_p\cap S|\geq|\flip(X_m)\cap S|-1=|Y_m\cap S|-1=\frac{|S|-1}{2}.$  Again, if $|X_p\cap S|>|\flip(X_m)\cap S|-1$ we have our vertex $v_p\notin S$ which is dominated by at most $\frac{|S|}{2}$ elements of $S.$  A schematic of these operations and the vertices $v_k$, $v_j$, $v_m$, and $v_p$ are shown in Figure~\ref{fig:crank-and-flip}.
\begin{figure}[ht!]
\centering
\includegraphics[width=0.5\textwidth]{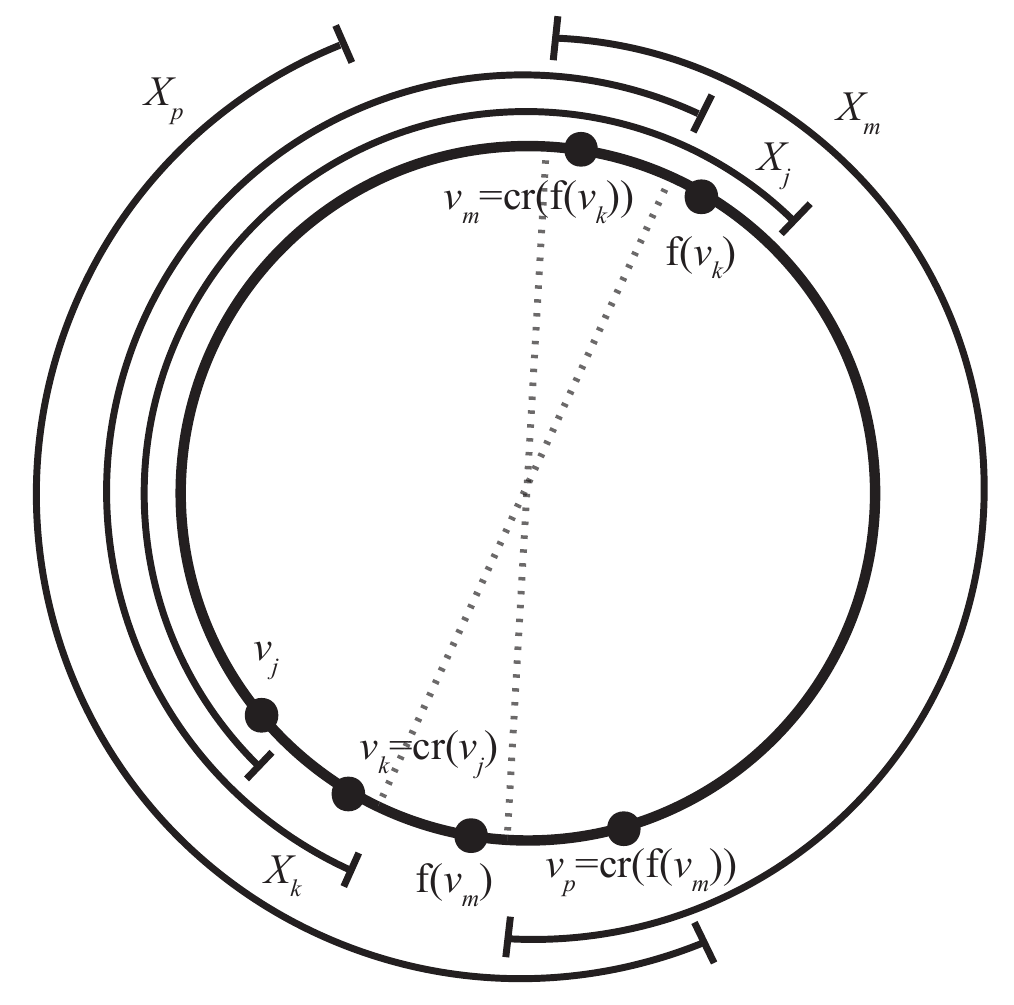}
\caption{The construction in Case 1 of Lemma~\ref{lem:clock_set}.} \label{fig:crank-and-flip}
\end{figure}

Finally, suppose $|X_p\cap S|=|X_m\cap S|=|X_k\cap S|=\frac{|S|-1}{2}.$  In this case, the vertices lost when performing the crank operation, $f(X_m)-X_p$, $f(X_k)-X_m$, and $X_j-X_k $ are all subsets of $S$.  Furthermore, by definition of the crank and flip operations, we have the following cyclic ordering of vertices:
$$j<p+\frac{n}{2}<m<k+\frac{n}{2}<j+\frac{n}{2}<p<m+\frac{n}{2}<k<j \pmod n.$$
If both $m$ and $k$ are less than or equal to $\frac{n}{2}$, then for all $i$ such that $k+\frac{n}{2} \leq i \leq m+\frac{n}{2}$, $i$ is greater than $\frac{n}{2}$.  Thus $p>\frac{n}{2}$.  Therefore, there must be at least one vertex among $v_k, v_m,$ and $v_p$ whose index is greater than $\frac{n}{2}$.  Call this vertex $v_t$.  By definition of a clockwise tournament on an even number of vertices, $v_t$ dominates vertex $v_{t+\frac{n}{2}}$ and $v_{t+\frac{n}{2}} \in Y_t$.  Since $v_{t+\frac{n}{2}}$ is an element of $\flip(X_m)-X_p\subseteq S$ if $t=p$, $\flip(X_k)-X_m\subseteq S$ if $t=m$, and $X_j-X_k\subseteq S$ if $t=k$, it follows that at most $|Y_t\cap S|-1=\frac{|S|-1}{2}$ vertices of $S$ dominate $v_t\notin S.$

\bigskip

\noindent \textbf{Case 2. $n$ is odd.} Recall that $|S|$ is odd, and there exists a vertex $v_j$ such that $|X_j\cap S|=\frac{|S|+1}{2}$.  Further recall that for all vertices $v_i \in V(T)$, $v_i$ dominates the $(n-1)/2$ other vertices of $X_i$ and is dominated by the $(n-1)/2$ vertices of $Y_i$.

Let $v_m = \crank(v_j)$.  By Inequality~\ref{eqn: crank ineq}, $|X_m \cap S|\ge|X_j\cap S|-1$.  If $|X_m \cap S| > |X_j\cap S|-1$ then $|X_m \cap S|\ge|X_j\cap S|=\frac{|S|+1}{2}$ and $x_m$ is dominated by at most $|S|/2$ vertices.  Otherwise, $|X_{m} \cap S| = |X_{j} \cap S|-1=\frac{|S|-1}{2}=|Y_{m} \cap S| -1$.  By Equation~\ref{eqn: flip odd}, $\flip(X_{m})=Y_m \cup\{v_m\}$ and $|\flip(X_{m})\cap S|=|Y_m\cap S|=|X_{m}\cap S|+1$.  Thus if $\flip(v_m)\notin S$, we have a vertex not in $S$ which is dominated by at most $\frac{|S|}{2}$ elements of $S$.  If $\flip(v_m)\in S,$ let $v_r=\crank(\flip(v_{m}))$.  Again, by Inequality~\ref{eqn: crank ineq}, $|X_r \cap S|\ge|\flip(X_m)\cap S|-1,$ with equality only when $\flip(X_m)-X_r \subseteq S.$  Given $\flip(v_m)\in S$, $\flip(X_m)-X_r$ is not empty and by definition of the crank and flip operations will contain the vertex $v_m.$  However, as $v_m$ was the result of performing the crank operation on $v_j$, by definition $v_m\notin S$.  Thus $\flip(X_m)-X_r\not\subseteq S$, and $|X_r \cap S|>|\flip(X_m)\cap S|-1.$  Therefore, $v_r$ is a vertex not in $S$ which is dominated by at most $\frac{|S|}{2}$ elements of $S.$
\end{proof}

These lemmas enable us to prove the next result categorizing the possible values of $\gamma_w(T)$. We showed in Proposition~\ref{prop:gammabound1} that $k/2 \leq \gamma_w(T) \leq 2k-1$. We now show that for any rational number $q$ in this range, there exists a weighted $k$-majority tournament $T$ such that $\gamma_w(T)=q$.

\begin{theorem}
Let $k>1$.  For any rational number $q$ where $k/2 \leq q \leq 2k-1$, there exists a weighted $k$-majority tournament $T$ such that $\gamma_w(T) = q$.  \label{characterization}
\end{theorem}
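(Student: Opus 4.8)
The plan is to realize every $q$ with $k/2<q\le 2k-1$ as $\gamma_w(T)$ for a tournament $T$ obtained from a large clockwise tournament by adjoining a single extra vertex, using Lemma~\ref{lem: clockwise} for realizability and Lemma~\ref{lem:clock_set} for the upper bound; the endpoint $q=k/2$ is immediate, since for odd $n$ Lemma~\ref{lem:clock_set} gives $W(CW(n),D)\le k/2$ for every dominating set $D$ while $D=CW(n)-v$ attains it, so $\gamma_w(CW(n))=k/2$. Write $q=x/y$ in lowest terms. The tournament will have the form $T=CW(N)+z$: the vertices $v_1,\ldots,v_N$ carry a clockwise tournament with all internal edges of weight $k$ (possible by Lemma~\ref{lem: clockwise}), and $z$ is one additional vertex whose in-neighbourhood $Z\subseteq\{v_1,\ldots,v_N\}$ and incident edge weights are chosen so that $D^\star=\{v_1,\ldots,v_N\}$ has $W(T,D^\star)=q$. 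Here $D^\star$ is a dominating set, since $z$ has an in-neighbour in $Z\subseteq D^\star$, and $W(T,D^\star)=w_a(z,D^\star)=\frac1N\sum_{v_i\in Z}w(v_i\to z)$, so it suffices to make this sum equal $qN$.

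For $q\ge k$ I take $z$ to be a sink ($Z=\{v_1,\ldots,v_N\}$) and $N=y$, choosing weights $w(v_i\to z)\in\{k,\ldots,2k-1\}$ summing to $qN=x$, which is possible precisely because $k\le q\le 2k-1$ (when $y=2$ I also set the internal edge to weight $k$). For $k/2<q<k$ I instead keep every edge of weight $k$, so that $T$ is a weighted $2$-majority tournament extended to weight $k$ by Remark~\ref{rmk: 2 to k-Tournament}; I take $N$ a large multiple of $ky$ and let $z$ be beaten by exactly $\ell:=qN/k$ of the $v_i$, noting $N/2<\ell<N$, so that $W(T,D^\star)=k\ell/N=q$. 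The in-neighbours of $z$ are chosen so that $T$ is strongly connected; this is necessary, because if $T$ were not strongly connected then its top strongly connected component would be a dominating set beating everything below it with full multiplicity, forcing $\gamma_w(T)\ge k$. In both regimes the realizability of $T$ as a weighted $k$-majority tournament follows from Lemma~\ref{lem: clockwise} by inserting $z$ into the defining linear orders of $CW(N)$ at positions chosen to produce the prescribed edge weights.

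The substance of the proof is the upper bound $\gamma_w(T)\le q$: no dominating set $D$ beats $q$. I would split on whether $z\in D$. If $z\notin D$, then $D\subseteq\{v_1,\ldots,v_N\}$ is a dominating subset of $CW(N)$; if moreover $|D|\le N-2$, Lemma~\ref{lem:clock_set} yields $v_0\notin D$ dominated within $CW(N)$ by at most $|D|/2$ vertices, so $w_a(v_0,D)\le k/2<q$; if $D=\{v_1,\ldots,v_N\}$ then $D=D^\star$, of weight exactly $q$; and if $D=\{v_1,\ldots,v_N\}-v_i$ then $W(T,D)\le w_a(v_i,D)=k\,\mathrm{indeg}_{CW(N)}(v_i)/(N-1)\le\tfrac{kN}{2(N-1)}\le q$ (the last inequality being trivial when $q\ge k$ and holding for $N$ large when $q<k$). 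If $z\in D$, then $D=V(T)$ (weight $0$), or $D=(\{v_1,\ldots,v_N\}-v_i)\cup\{z\}$ (outside vertex $v_i$, with $w_a(v_i,D)\to k/2$ as $N\to\infty$, hence $\le q$), or $D=\{z\}\cup S'$ with $S'$ a proper dominating subset of $CW(N)$, for which Lemma~\ref{lem:clock_set} gives $v_0\notin D$ with $w_a(v_0,D)\le\tfrac{k(|S'|/2+1)}{|S'|+1}$, a quantity that is $\le q$ as soon as $|S'|$ exceeds a bound depending only on $q$ and $k$.

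I expect the main obstacle to be the remaining case: a bounded family of small dominating sets $D=\{z\}\cup S'$ when $q$ is close to $k/2$, where the bound above on $|S'|$ is large and $\tfrac{k(|S'|/2+1)}{|S'|+1}$ can exceed $q$. For these I would argue directly that the out-neighbourhoods of the few vertices of $S'$ inside $CW(N)$ cannot cover $\{v_1,\ldots,v_N\}$ with enough multiplicity --- each vertex of a clockwise tournament beats only about half the others, and $z$ contributes at most one to any in-degree --- so some vertex outside $D$ is dominated by too small a fraction of $D$, forcing $w_a(v_0,D)\le q$. Making this estimate robust while simultaneously ensuring $T$ is strongly connected and realizable is the delicate point, and it is what forces the choice of $N$ sufficiently large relative to $q$ and $k$.
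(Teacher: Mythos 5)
Your construction is genuinely different from the paper's (which, for $k/2<q<k$, uses three clockwise blocks $A$, $B$, $C$ of sizes $2x$, $2ky-2x$, $ky$ arranged in a weighted cycle and then rules out better dominating sets through an eight-case analysis), but as written it is not a proof: the case you yourself flag --- dominating sets $D=\{z\}\cup S'$ with $|S'|$ below the threshold $(k-q)/(q-k/2)$, which is unbounded as $q\to k/2$ --- is exactly the crux of the theorem in the regime $k/2<q<k$, and you only gesture at a covering-multiplicity argument for it. This is where all the real work lies; the paper's entire Case~2 analysis exists to dispose of precisely such small, mixed dominating sets. Your sketch does look completable (for fixed $s=|S'|$, if every one of the $N-s$ outside vertices had more than $q(s+1)/k$ in-neighbours in $D$, the total number of edges from $D$ to the outside would exceed roughly $(N-s)\,q(s+1)/k$, while the available capacity is at most about $sN/2 + N(k-q)/k$; since $q>k/2$ the former outgrows the latter once $N$ is large, uniformly over the finitely many $s$ below the threshold, with Lemma~\ref{lem:clock_set} handling all larger $s$). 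But none of this is carried out, the choice of $N$ ``sufficiently large'' is never quantified against the threshold, and until it is, the central claim $\gamma_w(T)\le q$ is unproven.

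A second genuine gap is realizability. You assert that the prescribed weights on edges at $z$ follow ``from Lemma~\ref{lem: clockwise} by inserting $z$ into the defining linear orders,'' but inserting $z$ into a fixed order only lets $z$ lose to an up-set of that order, so the achievable in-weight vectors at $z$ are sums of $2k-1$ indicator vectors of up-sets of those particular $2k-1$ orders (three special orders plus ascending/descending padding). It is not obvious that an arbitrary weight vector with entries in $\{k,\ldots,2k-1\}$ summing to $qy$ (your $q\ge k$ case), or weight exactly $k$ on every edge at $z$ with a prescribed in-neighbourhood $Z$ of size $\ell$ (your $k/2<q<k$ case), is realizable this way; you would need either an explicit choice of $Z$ and insertion positions, or a separate lemma. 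The paper avoids this entirely by using only block-uniform weights (all cross-edges of weight $k$, or weights $w$ and $w+1$ into the single vertex $c$ in its Case~3), for which it writes down the realizing orders explicitly. So: the architecture of your argument is a legitimately different and arguably leaner route, but both the hard upper-bound case and the realizability of your weighted constructions need to be written out before this counts as a proof.
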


\begin{proof}

\noindent \textbf{Case 1. $q=k/2$.} We construct the clockwise tournament $T=CW(3)$ on vertices $\{a,b,c\}$ with weight $k$ on each edge using Lemma~\ref{lem: clockwise}.  The tournament $T$ has the four dominating sets $\{a,b\}$, $\{b,c\}$, $\{a,c\}$, and $\{a,b,c\}$, with corresponding weights $k/2$, $k/2$, $k/2$, and $0$, so $\gamma_w(T) = k/2$.
\bigskip

\noindent \textbf{Case 2. $k/2 < q < k$.} We write $q=x/y$ where $x$ and $y$ are positive integers and $x > y$.  We also choose $x$ and $y$ large enough so that \[\frac{k}{2q} < \frac{ky - 1}{ky}.\]  We can do this because $k/2<q$ and so $k/(2q) < 1$.  Since $(ky-1)/(ky)$ approaches 1 as $y$ gets large, we can choose $y$ such that $k/(2q)<(ky-1)/(ky)<1$.

Before constructing $T$, we first construct subtournaments $A$, $B$, and $C$ isomorphic to $CW(2x)$, $CW(2ky-2x)$, and $CW(ky)$ respectively, with vertices $a_1, \ldots, a_{2x}$, $b_1, \ldots, b_{2ky-2x}$, and  $c_{1}, \ldots, c_{ky}$.  Again using Lemma~\ref{lem: clockwise}, we create linear orders $\alpha_{1}, \alpha_{2}, \ldots, \alpha_{2k-1}$, $\beta_{1}, \beta_{2}, \ldots, \beta_{2k-1}$, and $\mu_{1}, \mu_{2}, \ldots, \mu_{2k-1}$ that realize $A$, $B$, and $C$, respectively, where each edge has weight $k$. We construct $T$ with the following $2k-1$ linear orders on $V(T)$.  A schematic of $T$ is shown in Figure~\ref{fig:case2}.

\[
  \pi_i :
  \begin{cases}
    \beta_i > \alpha_i > \mu_i & \text{if $1 \leq i \leq k-1$}, \\
     \alpha_i > \mu_i > \beta_i & \text{if $i = k$}, \\
  \mu_i  >\beta_i  > \alpha_i& \text{if $i = k+1$}, \text{ and}\\
 \mu_i > \alpha_i > \beta_i & \text{if $k + 2 \leq i \leq 2k-1$}.
  \end{cases}
\]

\begin{figure}[ht!]
\centering
\begin{tikzpicture}
    \tikzset{set/.style = {shape=circle,draw,minimum size=50pt}}
    \tikzset{vertex/.style = {shape = circle,fill=black,minimum size=4pt,inner sep=0pt}}
    \tikzset{edge/.style = {->,> = latex,line width=1.6pt}}
    \node[set, label={[yshift=0.25cm]$2x$ vertices}] (A) at  (150:2.5) {$A$};
    \node[set, label={[yshift=0.25cm]$2ky-2x$ vertices}] (B) at  (30:2.5) {$B$};
    \node[set, label={[yshift=-2.5cm]$ky$ vertices}] (C) at  (-90:1) {$C$};
    \draw[edge] (B) to node [above] {$k$} (A);
    \draw[edge] (A) to node [below left] {$k$} (C);
    \draw[edge] (C) to node [below right] {$k$} (B);
\end{tikzpicture}
\caption{The tournament $T$ in Case 2 of Theorem~\ref{characterization}. } \label{fig:case2}
\end{figure}
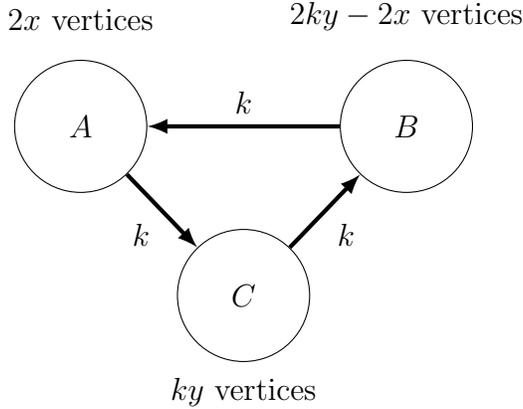

Now let $D = A \cup B$. Then each vertex in $T-D = C$ is dominated by $D$ with total inweight $k(2x)$, as the only vertices in $D$ dominating the vertices in $C$ are the vertices in $A$. Thus, \[W(T, D) = \frac{k(2x)}{2x + 2ky - 2x} = \frac{2kx}{2ky} = \frac{x}{y}=q.\] Now we prove that $D$ is a maximum weight dominating set.
Assume by way of contradiction that $D'$ is a dominating set of $T$ with $W(T, D') > W(T, D)$.

Note that since $k/2 < x/y < k$, we have
\begin{equation}\label{eqn: inequality}
    x < ky < 2x <2ky.
\end{equation}  We consider the eight possible combinations of whether $A$, $B$, or $C$ is a subset of $D'$.  In what follows, let $a = |A \cap D'|$, $b = |B \cap D'|$, and $c = |C \cap D'|$.
\bigskip

\noindent \textbf{Case 2a. $A\subseteq D'$, $B \subseteq D'$, and $C \subseteq D'$.}  In this case $D'=T$ and $W(T,D')=0<W(T,D)$.
\bigskip

\noindent \textbf{Case 2b. $A \not\subseteq D'$, $B \not\subseteq D'$, and $C \not\subseteq D'$.} Consider the inweight of elements in $A$ not in $D'$. Each element of $A-D'$ is dominated by each element of $B \cap D'$ with weight $k$; and by Lemma~\ref{lem:clock_set}, each element of $A-D'$ is dominated by at most $(a+1)/2$ elements of $A\cap D'$, each with weight $k$. Therefore, \[W(T,D') \leq \frac{k\left(b + \frac{a+1}{2}\right)}{a+b+c}.\] Since $W(T, D') > W(T, D)$, we have \[\frac{k\left(b + \frac{a+1}{2}\right)}{a+b+c} > \frac{x}{y}.\] Cross multiplying and simplifying the inequality gives \[2kyb + kya + ky > 2ax + 2bx + 2cx.\] Further simplification yields \begin{equation}\label{eqn:bound_a} a < \frac{(2ky-2x)b - 2cx + ky}{2x-ky}.\end{equation}
Repeating this process for elements in $B$ but not in $D'$ yields
\begin{equation}\label{eqn:bound_b} b < \frac{(2ky-2x)c - 2ax + ky}{2x-ky}.\end{equation}
Combining Inequalities \ref{eqn:bound_a} and \ref{eqn:bound_b} and simplifying yields \[a < \frac{c(4ky-6x)}{ky }+1.\] Now, since by Inequality \ref{eqn: inequality} we have $3ky - 6x < 0$, it must be the case that $4ky - 6x < ky$, and thus $(4ky-6x)/ky <1$. Therefore $a < c+1$ and since we can perform the same procedure to find the average inweight of elements in $B-D'$ and $C-D'$, we get that $b < a + 1$ and $c < b + 1$. Combining these three inequalities we obtain $a = b = c$. Let $r = a = b = c$. Then, we rewrite the average inweight of elements of $T-D'$ in terms of $r$. Furthermore, by Inequality \ref{eqn: inequality}, $|A|=2x$ is more than half of $2ky$, and thus $|A| \geq |B|+2$.  Therefore $|A-D'| \geq 2$. Thus, using Lemma~\ref{lem:clock_set}, the average inweight of elements of $A-D'$ is \[\frac{k(r + \frac{r}{2})}{3r} > \frac{x}{y}.\] Simplifying results in $ky > 2x$, which is a contradiction.
\bigskip

\noindent \textbf{Case 2c. $A \subseteq D'$ and $B \not \subseteq D'$.} Since the maximum possible average inweight of any element in $B-D'$ occurs when $C\subseteq D'$, we will make this assumption throughout this case.  If $B-D'$ contains more than one element then by Lemma~\ref{lem:clock_set}, elements of $B-D'$ are dominated by at most $b/2$ elements of $B\cap D'.$ Thus, the maximum possible average inweight of any element in $B-D'$ is equal to \[\frac{k(\frac{b}{2} + ky)}{2x + b + ky}.\] By assumption, this is greater than $x/y$. Cross multiplying and simplifying yields \[\frac{(k y - 2 x) (b/2 + k y + x)}{y (b + k y + 2 x)}>0.\]  Since the variables all represent non-negative integers, this implies $ky>2x$, and we arrive at the same contradiction as in Case 2b.

Therefore there must be exactly one element $v \in B - D'$, so $b = 2ky-2x-1$ and $v$ is dominated by at most $(2ky -2x)/2$ elements of $B$ by Lemma~\ref{lem:clock_set}.  Thus, the maximum possible average inweight of $v$ is \[\frac{k(\frac{2ky-2x}{2}+ky)}{2x + 2ky-2x-1+ky}=\frac{k(2ky-x)}{3ky-1}.\] By our assumption, this is greater than $x/y$, which by Inequality \ref{eqn: inequality} is greater than $k/2.$  Thus we have, \[\frac{k(2ky-x)}{3ky-1} >\frac{k}{2},\] Simplifying, we find $ky > 2x-1$.  Again applying Inequality \ref{eqn: inequality} we find that $2x-1<ky<2x$, contradicting the fact that $ky$ is an integer.
\bigskip

\noindent \textbf{Case 2d. $C \subseteq D'$ and $A \not\subseteq D'$.} We assume $B\subseteq D',$ as this leads to the maximum possible average inweight of elements in $A-D'$.  If $|A-D'| \geq 2$, then by Lemma~\ref{lem:clock_set}, elements of $A-D'$ are dominated by at most half of the $a$ elements in $A\cap D'$.  The maximum possible average inweight of any element in $A-D'$ is thus equal to \[\frac{k(2ky-2x + \frac{a}{2})}{2ky-2x+ky+a}.\]   By assumption, this is greater than $x/y$. Cross multiplying and simplifying yields that \[a < \frac{2(2ky-x)(ky-2x)}{2x-ky}.\] Applying Inequality~\ref{eqn: inequality}, $ky-2x$ is negative and the rest of the terms are positive. Therefore $a$ is negative, which is a contradiction.

Now assume there is only one element $v \in A-D'$, which by Lemma~\ref{lem:clock_set} is dominated by $2x/2$ elements of $A\cap D'$. Thus, $v$ is dominated by elements of $D'$ with at most an average inweight of \[\frac{k(2ky-2x+\frac{2x}{2})}{2ky-2x+ky+2x-1}=\frac{k(2ky-x)}{3ky-1}.\]
By the same argument used in the last paragraph of Case 2c, this leads to a contradiction.
\bigskip

\noindent \textbf{Case 2e. $B \subseteq D'$ and $C \not \subseteq D'$.} Elements of $C - D'$ will have maximum inweight when we take $A\subseteq D'$.  If there are two or more elements in $C-D'$ then
by Lemma~\ref{lem:clock_set}, the average inweight of these elements is at most  \[\frac{k(2x+\frac{c}{2})}{2ky + c}.\] Setting this  greater than $x/y$ and simplifying gives $ky > 2x$, which is a contradiction.

If there is exactly one element $v \in C-D'$ then $c = ky-1$, and by Lemma \ref{lem:clock_set} the vertex $v$ is dominated by at most $ky/2$ elements of $C\cap D'$.  In this case the average inweight of $v$ is at most \[\frac{k(2x + \frac{ky}{2})}{2ky +ky-1}.\] Once again, setting this greater than $x/y$ and simplifying, and we find that
\[\frac{ky}{2x} > \frac{ky-1}{ky}.\] However, we chose $x$ and $y$ sufficiently large so that \[\frac{k}{2q} = \frac{ky}{2x} < \frac{ky-1}{ky},\] and we have a contradiction.
\bigskip

\noindent \textbf{Case 3. $k \leq q \leq 2k-1$.} Let $w$, $x$, and $y$ be nonnegative integers such that $x < y$ and $q = w + x/y$ (note the change in the usage of $x$ and $y$ for this case).  We construct $T$ as a disjoint union of two clockwise tournaments $A = CW(x)$ and $B=CW(y-x)$, with vertices $a_1, \ldots, a_x$ and $b_1, \ldots, b_{y-x}$ respectively, and a vertex $c$.  By Lemma~\ref{lem: clockwise}, let $\alpha_{1}, \alpha_{2}, \ldots, \alpha_{2k-1}$ be linear orders that realize $A$ as a weighted $k$-majority tournament where each edge has weight $k$, and $\beta_{1}, \beta_{2}, \ldots, \beta_{2k-1}$ be linear orders that realize $B$ as a weighted $k$-majority tournament where each edge has weight $k$.  We realize $T$ with the following $2k-1$ linear orders, replacing $\alpha_i$ and $\beta_i$ with the full linear orders in each case.  A schematic of $T$ is shown in Figure~\ref{fig:case3}.  Note that we may have $x=0$ in this figure, in which case the circle labeled $A$ is missing.

\[
  \pi_i :
  \begin{cases}
                                   \beta_i > \alpha_i > c & \text{if $1 \leq i \leq k$}, \\
                                   \alpha_i > \beta_i > c & \text{if $k+1 \leq i \leq w$}, \\
  \alpha_i > c >\beta_i & \text{if $i = w +1$}, \text{ and}\\
 c > \alpha_i > \beta_i & \text{if $w + 2 \leq i \leq 2k-1$}.
  \end{cases}
\]

\begin{figure}[ht!]
\centering
\begin{tikzpicture}
    \tikzset{set/.style = {shape=circle,draw,minimum size=50pt}}
    \tikzset{vertex/.style = {shape = circle,fill=black,minimum size=4pt,inner sep=0pt}}
    \tikzset{edge/.style = {->,> = latex,line width=1.6pt}}
    \node[set, label={[yshift=0.25cm]$x$ elements}] (A) at  (150:2.5) {$A$};
    \node[set, label={[yshift=0.25cm]$y-x$ elements}] (B) at  (30:2.5) {$B$};
    \node[vertex, label={[yshift=-0.75cm]$c$}] (c) at  (-90:1) {};
    \draw[edge] (B) to node [above] {$k$} (A);
    \draw[edge] (A) to node [below left] {$w + 1$} (c);
    \draw[edge] (B) to node [below right] {$w$} (c);
\end{tikzpicture}
\caption{The tournament $T$ in Case 3 of Theorem~\ref{characterization}.} \label{fig:case3}
\end{figure}
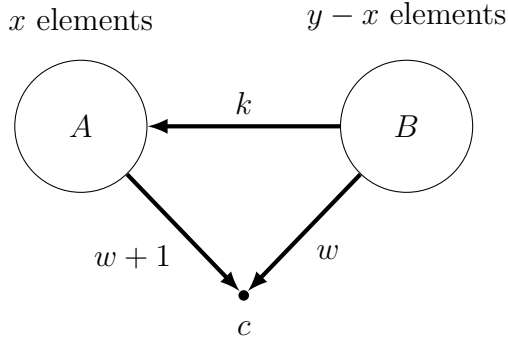

By construction $D = A\cup B$ is a dominating set and $c,$ the only element in $T-D$, has inweight $(w+1)|V(A)| + w|V(B)| = (w+1)x + w(y-x) = w y + x$.  Since $|D| = x + y -x = y$, we have that \[W(T,D) = \frac{w y+x}{y} = w + \frac{x}{y} = q.\]  Now we must show that for any dominating set $D'$ of $T$, $W(T,D') \leq q$.

Suppose $D'$ is a dominating set not equal to $D$. Then, either $D'$ is all of $T$, in which case $W(T,D') = 0$, or $T-D'$ contains a vertex $v$ in $A\cup B$.  Notice that in our construction, all edges to vertices in $A$ or $B$ have weight $k$.  Thus, $v$ gets dominated on average by vertices in $D'$ with weight less than or equal to $k$ and hence $W(T,D') \leq k$.  Since $W(T,D) =q \geq k$, we know that $W(T,D') \leq W(T,D)$.  Thus $D$ is a maximum weight dominating set of $T$ and so $\gamma_w(T) = q$.
\end{proof}

We note that if $k\le q \le 2k-1$, the construction in Case 3 of the proof of Theorem~\ref{characterization} of a $k$-majority tournament with $\gamma_w(T)=q$ is minimal since it has $y+1$ vertices, the smallest number necessary for generating an approval gap with denominator $y$ in reduced form.  The construction provided for $k/2<q<k$ in Case 2 results in a $3ky$ vertex $k$-majority tournament, which is often not minimal.  For example, when $q=k-\frac{1}{2}$, the linear orders shown in Figure \ref{fig:case1/2} realize a $k$-majority tournament $T$ with $\gamma_w(T)=q$ and only $2k+1$ vertices.  As before, $\alpha_{1}, \alpha_{2}, \ldots, \alpha_{2k-1}$ are linear orders that realize $A=CW(2k-1)$ as a weighted $k$-majority tournament in which each edge has weight $k$.

\begin{figure}[h!]
\centering
\parbox[b]{3cm}{\[
  \pi_i :
  \begin{cases}
   b > \alpha_i > c & \text{if $1 \leq i \leq k-1$}, \\
     \alpha_i > c > b & \text{if $i = k$}, \\
  c >b  > \alpha_i& \text{if $i = k+1$}, \text{ and}\\
 c > \alpha_i > b & \text{if $k + 2 \leq i \leq 2k-1$}.
  \end{cases}
\]} \hfill
\begin{tikzpicture}
    \tikzset{set/.style = {shape=circle,draw,minimum size=50pt}}
    \tikzset{vertex/.style = {shape = circle,fill=black,minimum size=4pt,inner sep=0pt}}
    \tikzset{edge/.style = {->,> = latex,line width=1.6pt}}
    \node[set, label={[yshift=0.25cm]$2k-1$ vertices}] (A) at  (150:2.5) {$A$};
    \node[vertex, label={[xshift=0.25cm]$b$}] (B) at  (30:2.5) {};
    \node[vertex, label={[yshift=-.75cm]$c$}] (C) at  (-90:1) {};
    \draw[edge] (B) to node [above] {$k$} (A);
    \draw[edge] (A) to node [below left] {$k$} (C);
    \draw[edge] (C) to node [below right] {$k$} (B);
\end{tikzpicture}
\caption{A $k$-majority tournament $T$ on $2k-1$ vertices having $\gamma_w(T)=k-\frac{1}{2}$. } \label{fig:case1/2}
\end{figure}
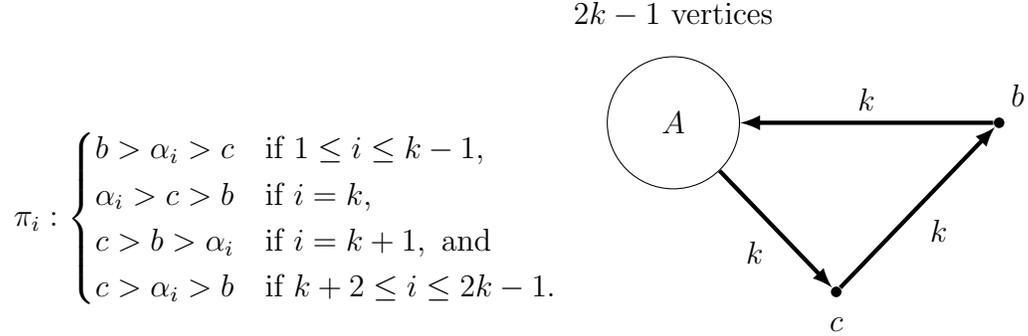

Let $m(q,k)$ be the smallest number of vertices of a $k$-majority tournament $T$ with $\gamma_w(T)=q$.  By the remarks above, when $k\le q \le 2k-1$, $m(q,k)=y+1$, where $q=x/y$ in lowest terms.  Note this includes the case where $q$ is an integer, in which case $x=q$, $y=1$, and $m(q,k)=2$.  The following proposition shows that $m(q,k)=3$ if the fractional part of $q$ is $0$ or $1/2$ and $k/2 \leq q \leq k-1$.

\begin{proposition}
For all $k$ and $q$ such that $k \geq 2$, $2q$ is an integer, and $k/2 \leq q \leq k-1$, $m(q,k)=3$. \label{m-values}
\end{proposition}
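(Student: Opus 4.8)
The plan is to establish the two inequalities $m(q,k) \geq 3$ and $m(q,k) \leq 3$ separately. For the lower bound, I would argue that $m(q,k) \neq 1$ and $m(q,k) \neq 2$. A one-vertex tournament has only the dominating set $D = V(T)$, giving $\gamma_w(T) = 0$, which is not in the allowed range since $q \geq k/2 \geq 1$. For a two-vertex tournament $T$ with edge $u \to v$ of weight $w(u\to v) \in \{k,\dots,2k-1\}$, the dominating sets are $\{u\}$ and $\{u,v\}$; the maximizing one is $\{u\}$, with $\gamma_w(T) = w(u \to v)$, an integer. Thus any $q$ realized on two vertices is an integer with $k \leq q \leq 2k-1$, which cannot satisfy $q \leq k-1$. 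Hence $m(q,k) \geq 3$.

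For the upper bound, I would give an explicit construction on three vertices, mirroring the constructions in Theorem~\ref{characterization} and Figure~\ref{fig:case1/2}. Write $2q = 2k - s$ for a positive integer $s$ (using $q \leq k-1$ means $s \geq 2$, and $q \geq k/2$ means $s \leq k$). Take $T$ to be $CW(3)$ on vertices $\{a,b,c\}$, and the goal is to choose edge weights so that two of the three two-element dominating sets have average inweight exactly $q$. Recall that $CW(3)$ is the directed 3-cycle $a\to b\to c\to a$, and its two-element dominating sets are $\{a,b\}$ (dominating $c$), $\{b,c\}$ (dominating $a$), and $\{a,c\}$ (dominating $b$); each consists of the two in-neighbors... no, each dominating set of size two dominates the single remaining vertex, which is dominated by exactly one of the two set members (since in a 3-cycle each vertex has in-degree one). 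So $W(T,D)$ for a two-element $D$ equals (weight of the one dominating edge)$/2$. To get this equal to $q$ we need that edge to have weight $2q = 2k-s$; since $k \leq 2k - s \leq 2k-1$ requires $1 \leq s \leq k$, which holds, $2k-s$ is a legal edge weight. I would set all three edge weights equal to $2k-s$; then every two-element dominating set has weight $(2k-s)/2 = q$, the set $V(T)$ has weight $0$, and singletons are not dominating sets (each vertex fails to dominate one other), so $\gamma_w(T) = q$.

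The remaining obligation — and the step I expect to require the most care — is realizing $CW(3)$ with all edges of the prescribed weight $2k-s$ as a genuine weighted $k$-majority tournament, i.e. exhibiting $2k-1$ linear orders on $\{a,b,c\}$ in which each of $a>b$, $b>c$, $c>a$ holds in exactly $2k-s$ orders. By Lemma~\ref{lem: clockwise} (via Remark~\ref{rmk: 2 to k-Tournament}) we can realize $CW(3)$ with all edges of weight exactly $k$; to boost a weight toward $2k-s$ we can replace some of the "padding" orders $v_1 < \cdots < v_n$ / $v_n < \cdots < v_1$ from Remark~\ref{rmk: 2 to k-Tournament} with copies of the cyclic orders $a>b>c$, $b>c>a$, $c>a>b$, each of which contributes to exactly two of the three cyclic relations. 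Concretely, taking $j$ copies of each of the three cyclic orders contributes $2j$ to each relation, and taking the original three "realizing" orders contributes a base of $2$ to each, so with appropriate bookkeeping one reaches common weight $2 + 2j$ plus a correction from a few extra orders; since $s$ has the same parity constraints available and $2 \leq 2k-s \leq 2k-1$, a short case analysis on the parity of $s$ produces the required $2k-1$ orders. I would present this as a small explicit list of orders (one list for $s$ even, one for $s$ odd, or uniformly by padding), verify the three weight counts, and conclude $m(q,k)=3$.
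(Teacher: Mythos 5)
Your lower bound ($m(q,k)\neq 1,2$) is fine and matches the paper's argument, but your upper-bound construction has a genuine flaw: you cannot realize $CW(3)$ with all three edges of weight $2q$ for most $q$ in the stated range. Every linear order on the three candidates agrees with at most two of the three cyclic comparisons $a>b$, $b>c$, $c>a$, so over $2k-1$ orders the total weight carried by the three edges of the $3$-cycle is at most $2(2k-1)=4k-2$. Demanding weight $2q$ on all three edges forces $6q\le 4k-2$, i.e. $q\le (2k-1)/3$, which fails for most of the interval $[k/2,k-1]$; e.g.\ for $k=3$, $q=2$ you would need total weight $12$ from at most $10$ available agreements. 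So the final step you defer to ``a short case analysis on the parity of $s$'' is not merely unfinished --- no choice of orders can do it. (Your scheme does happen to work for $k=2$, where $q=1$.) The legality check you did perform, $k\le 2k-s\le 2k-1$ for a single edge, does not address this joint constraint across the three edges.

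The repair is to boost only the edge actually used by the optimal dominating set, which is what the paper does (Figure~\ref{3verTourn}): take the $3$-cycle $a\to b\to c\to a$ with weights $2q$, $k$, $k$ (total $2q+2k\le 4k-2$ precisely because $q\le k-1$), realized by $k-1$ copies of $c>a>b$, $2q-k+1$ copies of $a>b>c$, $2k-2-2q$ copies of $b>a>c$, and one copy of $b>c>a$. Then $\{a,c\}$ has approval gap $2q/2=q$, while $\{a,b\}$ and $\{b,c\}$ give $k/2\le q$ and $\{a,b,c\}$ gives $0$, so $\gamma_w(T)=q$; combined with your correct lower bound this yields $m(q,k)=3$.
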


\begin{proof}
We construct a $k$-majority tournament $T$ with vertices $a$, $b$, and $c$, realized by the following $2k-1$ linear orders.  The weighted $k$-majority tournament $T$ is shown in Figure~\ref{3verTourn}.

\[
  \pi_i :
  \begin{cases}
    c > a > b & \text{if } 1 \leq i \leq k-1, \\
     a > b > c & \text{if } k\le i \le 2q, \\
  b > a  > c & \text{if } 2q+1\le i\le 2k-2, \text{ or for no values of }i \text{ if } q=k-1, \text{ and} \\
 b > c > a & \text{if } i=2k-1.
  \end{cases}
\]

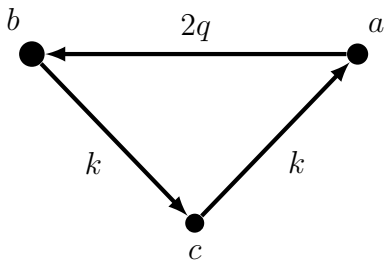
\begin{figure}[ht!]
\centering
\begin{tikzpicture}
    \tikzset{vertex/.style = {shape = circle,fill=black,minimum size=4pt,inner sep=0pt}}
    \tikzset{edge/.style = {->,> = latex,line width=1.6pt}}

    \node[vertex, label={[xshift=-0.25cm] $b$}] (a) at  (150:2.5) {$b$};
    \node[vertex, label={[xshift=0.25cm] $a$}] (b) at  (30:2.5) {$a$};
    \node[vertex, label={[yshift=-0.75cm] $c$}] (c) at  (-90:1) {$c$};
    \draw[edge] (b) to node [above] {$2q$} (a);
    \draw[edge] (a) to node [below left] {$k$} (c);
    \draw[edge] (c) to node [below right] {$k$} (b);
\end{tikzpicture}
\caption{The construction in the proof of Proposition~\ref{m-values}.}  \label{3verTourn}
\end{figure}

The tournament $T$ has the four dominating sets $\{a,b\}$, $\{b,c\}$, $\{a,c\}$, and $\{a,b,c\}$, with corresponding weights $k/2$, $k/2$, $q$, and $0$.  Since $q \geq k/2$, $\gamma_w(T)=q.$  Therefore $m(q,k)\leq 3$.  As every $k$-majority tournament $T$ on two vertices has $\gamma_w(T)\ge k$ and every $k$-majority tournament $T$ on 1 vertex has $\gamma_w(T)=0$, $m(q,k) = 3$.
\end{proof}

\section{Open Questions}

We conclude with a list of potential further directions for research.

\begin{enumerate}

\item
We found the value of $m(q,k)$ for all rational numbers $q$ such that $k \leq q \leq 2k-1$ and for values of $q$, $k/2 \leq q \leq k-1$, where $q$ is an integer or has fractional part $1/2$.  What are the remaining values of $m(q,k)$? Is the bound $m(k-1/2, k)\leq 2k+1$ best possible? What is $m(q,k)$ for $k/2 \leq q <k$ with fractional part $1/3$ or $2/3$?

\item Instead of maximizing the approval gap of a dominating set, we could consider another parameter which minimizes it.

\item Our parameter $\gamma_w(T)$ maximizes the approval gap of one dominating set over the next closest candidate.  We could ask instead for the approval gap of one dominating set over the next closest dominating set.

\item We could consider $\gamma_w(T,j)$, which maximizes the approval gap of dominating sets of a fixed size $j$, letting $\gamma_w(T,j)=0$ if no such set exists.

\end{enumerate}

\bibliographystyle{plain}
\bibliography{citations}

\end{document}